\numberwithin{equation}{section}
\theoremstyle{definition}
\newtheorem{definition}{Definition}[section]
\newtheorem{fact}[definition]{Fact}
\theoremstyle{remark}
\newtheorem{remark}[definition]{Remark}
 \theoremstyle{plain}
\newtheorem{theorem}[definition]{Theorem}
\newtheorem{result}[definition]{Result}
\newtheorem{lemma}[definition]{Lemma}
\newtheorem{proposition}[definition]{Proposition}
\newtheorem{corollary}[definition]{Corollary}
\newcommand{\zt}{\zeta}
\newcommand{\que}{\mathcal{Q}}
\newcommand{\OM}{\Omega}
\newcommand{\D}{\mathbb{D}}
\newcommand{\hol}{\mathcal{O}}
\newcommand{\hcf}{{\sf gcd}}
\newcommand{\Cn}{\mathbb{C}^n}
\newcommand{\C}{\mathbb{C}} 
\newcommand{\Nn}{\mathbb{N}^n}
\newcommand{\Tn}{\mathbb{T}^n}
\newcommand*{\rom}[1]{\expandafter\@slowromancap\romannumeral #1@}
\begin{document}

\title[The three-point Pick interpolation problem]{The three-point Pick--Nevanlinna interpolation problem on the polydisc}

\author{Vikramjeet Singh Chandel}
\address{Department of Mathematics, Indian Institute of Science, Bangalore 560012, India}
\email{abelvikram@math.iisc.ernet.in}

\thanks{This work is supported by a UGC Centre for
Advanced Study grant and by a scholarship from the IISc}

\keywords{Pick--Nevanlinna interpolation, irreducible inner functions, polydiscs, rational inner functions}
\subjclass[2010]{Primary: 32A17, 32F45; Secondary: 46E20, 30J10}

\begin{abstract}
We give a characterization for the existence of a holomorphic interpolant on 
the unit polydisc $\D^n,$ $n\geq 2,$ for prescribed three-point Pick--Nevanlinna data.
One of the key steps is a characterization for the existence of an interpolant that is
a rational inner function on $\D^n.$ The latter reduces
the search for a three-point interpolant to finding a single rational
inner function that satisfies a type of positivity condition and arises
from a polynomial of a very special form.
This in turn relies on a pair of results, which are of independent interest,
on the factorization of rational inner functions.
\end{abstract}
\maketitle

\section{Introduction and statement of results}\label{S:intro}
The problem alluded to in the title of this work is the following
(in this work, $\D$ will denote the open unit disc with centre $0\in\C$):
\begin{itemize}
\item[$(*)$] Let $X_1,\dots,X_{N}$ be distinct points in $\D^n$ and let $w_1,\dots,w_{N}\in\D.$
Characterize those data $\{(X_j,w_j):1\leq j\leq N\}$ for which there exists a holomorphic function
$\Phi:\D^n\longrightarrow\D$ such that $\Phi(X_j)=w_j, \ j=1,\dots,N.$
\end{itemize}
\noindent This, in the case $n=1$, was solved by Pick in $1916$ and the properties 
of an interpolant $\Phi,$ whenever it exists, were studied by Nevanlinna. Sarason's proof 
\cite{sarason:ftp67} opened up a new paradigm 
for approaching $(*)$ for $n\geq 2.$ This approach led to Agler's solution to {\em a version of $(*)$}:
characterizing those $\{(X_j,w_j):1\leq j\leq N\},$ for any $n\geq 2,$ that admit
an interpolant in the Schur--Agler class: see \cite[Theorem 11.90]{agmac:ftp02}.
This stems from Agler's solution \cite{ag:ftp88} of $(*)$ for $n=2.$ Ball and Trent in \cite{ball:ftp98}
provided a simpler proof of Agler's solution\,---\,also see \cite{ball:ftp99}\,---\,and found a
parametrization of all Schur--Agler-class interpolants.
\smallskip

Agler's solution to $(*)$ for $n=2$ relies on And\^{o}'s inequality \cite{ando:ftp63}
(see also the article \cite{agmac:ftp99} by Agler--McCarthy). For $n\geq 3,$ the
Schur--Agler class is {\em strictly} smaller than the class $\{\Phi\in H^{\infty}(\D^n):\sup_{\D^n}|\Phi|\leq 1\}$
(i.e., the Schur class). There have thus been several articles in the last two decades
that have dwelt on the problem $(*).$ Apart from the aforementioned works, we refer the reader
to the articles \cite{cw:ftp94}, \cite{ham:ftp013} and to the works listed in the references therein.
However, despite all the results obtained so far:
\begin{itemize}
\item[$(i)$] The known characterizations for a Schur-class interpolant, when $n\geq 2,$ involve 
searching for a positivity condition through a {\em large} space of parameters.
\item[$(ii)$] Until very recently, one had little knowledge of the structure of
the interpolant $\Phi\in\hol(\D^n;\D)$ whenever it exists.
\end{itemize}
\smallskip

A class of functions in which one may look for an interpolant for the data $\{(X_j,w_j):1\leq j\leq N\}$
(or, alternatively, conclude that there is no such interpolant in that class) is the class of 
rational inner functions on $\D^n.$ This would certainly address the concern $(ii)$ above: there is a lot
that one knows about the structure of rational inner functions. We shall recall some of these properties at
the beginning of Section~\ref{S:prelim-res}. This viewpoint is strongly supported by developments
not long after the first version of this paper was written.
%It turns out that if we can replace the Schur-class
%by the class of rational inner functions in the statement $(*)$ then this allows us
%to use aspects of the strategy {\em originally} used to solve $(*)$ for
%$n=1.$ Specifically: we exploit the fact that the respective automorphism groups act transitively on $\D^n$
%and $\D.$

\begin{remark}\label{Rm:b}
Under the constraint $N=3,$  based on the work of Kosi{\'n}ski \cite{kos:ftp015}, 
Knese recently proved \cite[Section 4]{Knese:ftp015} that if a
Schur-class interpolant for $(*)$ exists, then an interpolant in the Schur--Agler class can be found. 
Schur--Agler-class functions in the polydisc have a well known realization formula: see \cite{Agler:ftp90} by Agler.
Bringing this realization formula to the argument underlying \cite[Corollary 2.13]{agmac:ftp99} one can prove (see the
remark on p.~193 of \cite{agmac:ftp99}) that a rational inner interpolant exists.
Our main theorem, which relies on the last two facts, is an outgrowth of an earlier version\,---\,of June
$2015$\,---\,of this paper \cite{chand:ftp015}.
\end{remark}

Further comment on our result, in view of Knese's work,
will be more meaningful once we have stated our main theorem.
To do so, we shall need some notations and terminology. Given a polynomial
$Q\in \C[z_1,\dots, z_n]$, recall that the {\em support of $Q$} is the set
\[
 {\sf supp}(Q) = \big\{\alpha\in \Nn : \ \text{\small $\dfrac{\partial Q}{\partial z^\alpha}(0)$}\neq 0\big\}.
\]
Writing $Q(z) =\sum_{j=0}^{d}\sum_{|\alpha|=j} a_{\alpha}z^{\alpha}$, define
(we use standard multi-index notation here)
\begin{align*}
 \widetilde{Q}(z)&:=\sum_{j=0}^{d}\sum_{|\alpha|=j}\overline{a_{\alpha}}z^{\alpha},\\
 \widetilde{Q}\left(\frac{1}{z}\right)&:=\sum_{j=0}^{d}\sum_{|\alpha|=j}\overline{a_{\alpha}}
 \frac{1}{z^{\alpha}},\\
 \nu(Q)&:=(\nu_1(Q),\dots,\nu_n(Q)),
\end{align*}
where $\nu_j(Q)$ denotes the
degree of the polynomial $Q(b_1,\dots,b_{j-1},\zeta,b_{j+1},\dots,b_n)\in\C[\zeta]$ for a {\em generic}
$(b_1,\dots,b_{j-1},b_{j+1},\dots,b_n)\in\C^{n-1}.$ We say that the polynomial $Q$ is {\em deficient
in degree} if the multi-index $\nu(Q)\notin {\sf supp}(Q)$ (our terminology stems from the fact that the
latter property is equivalent to $|\nu(Q)| > d$). We are now in a position to state our main theorem.
One final note: given $a\in\D,$ $\psi_a$ will denote the automorphism
\begin{equation}
\psi_a(z)=\frac{z-a}{1-\bar{a}z}, \ \ \ z\in\D,\label{E:aaut}
\end{equation}
and in what follows, given $X_3\in\D^n$ we define $\Psi_{X_3}\in\text{Aut}(\D^n)$ as
$\Psi_{X_3}\equiv(\psi_{X_{3,1}},\dots,\psi_{X_{3,n}}),$
where we write $X_3:=(X_{3,1},\dots,X_{3,n}).$

\begin{theorem}\label{T:mainThm}
Let $X_1,X_2,X_3$ be three distinct points in $\D^n,\ n\geq 2,$ and let $w_1,w_2,w_3\in\D.$
Then the following are equivalent:
\begin{itemize}
\item[\rom{1})] There exists $\Phi\in H^{\infty}(\D^n)$ such that $\sup_{\D^n}|\Phi|\leq 1$ and
$\Phi(X_j)=w_j,\ j=1,2,3.$
\item[\rom{2})] There exists a rational inner function $F$ on $\D^n$ such that $F(X_j)=w_j,\ j=1,2,3.$ 
\item[\rom{3})] There exists a rational inner function $H$ on $\D^n$ such that
\[
 w_j'/H(X_j')\in\overline{\D}\;\;\;\text{for $j=1,2$},
\]
and is of either one of the following forms:
\[
 H(z) = \begin{cases}
 		z_j \ \ \text{for some $j:\,1\leq j\leq n$}, &\!\!\!\text{OR} \\
 		z^{\nu(Q)}\widetilde{Q}(\frac{1}{z})/Q(z), &{ \ }
 		\end{cases}
\]
where $Q$ is an irreducible polynomial having no zeros in $\D^n$ and is deficient in degree,
and there exists an integer $l\in\{1,2,\dots,n\}$ such that the $2\times 2$ matrix
\begin{equation}
{\begin{bmatrix}
\dfrac{1-({w_j'}/{H(X_j')})(\overline{{w_k'}/{H(X_k')}})}{1-X_{j,\,l}'\overline{X'}_{k,\,l}}
\end{bmatrix}}_{j,k=1}^{2}\label{E:mainmat}
\end{equation}
 is positive semi-definite. Here $w_j':=\psi_{w_3}(w_j),$ $X_j':=\Psi_{X_3}(X_j), \ j=1,2,$ and we write
 $X_j=(X_{j,\,1},\dots,X_{j,\,n}).$ 
\end{itemize}
Furthermore, if the last condition holds true, then:
\begin{itemize}
\item[$a)$] If the matrix in \eqref{E:mainmat} is zero, then $\exists c\in\partial{\D}$ such that
 $F={\psi}^{-1}_{w_3}\circ(cH)\circ\Psi_{X_3}$ is an interpolant for the above data.
\item[$b)$] If the rank of the matrix in \eqref{E:mainmat} is $r, \ r=1,2,$ then there is a
 Blaschke product $B$ of degree $r$ such that $F={\psi}^{-1}_{w_3}\circ((B\circ \pi_l)H)\circ\Psi_{X_3}$ is an
 interpolant for the above data
(here, $\pi_l$ denotes the projection onto the $l$-th coordinate, $l$ as above).
\end{itemize}
\end{theorem}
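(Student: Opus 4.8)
The plan is to follow the classical one-variable strategy, exploiting the transitive action of the automorphism groups of $\D^n$ and of $\D$ to normalize the third interpolation node to the origin, and then to reduce the resulting two-point problem to a one-variable two-point Pick problem by dividing out a single irreducible factor. First I would note that $F\mapsto G:=\psi_{w_3}\circ F\circ\Psi_{X_3}^{-1}$ sets up a bijection between the rational inner functions $F$ on $\D^n$ with $F(X_j)=w_j$, $j=1,2,3$, and the rational inner functions $G$ on $\D^n$ with $G(0)=0$ and $G(X_j')=w_j'$, $j=1,2$: precomposition with an automorphism of the polydisc and postcomposition with a disc automorphism preserve rational innerness, and $\Psi_{X_3}(X_3)=0$, $\psi_{w_3}(w_3)=0$. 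So it suffices to characterize the solvability of this normalized two-point problem and to read off the structure of its solutions.

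For the ``if'' direction, which also yields parts $a)$ and $b)$, suppose an $H$ of one of the two listed forms and an index $l$ are given with $u_j:=w_j'/H(X_j')\in\overline{\D}$, $j=1,2$, and with the matrix in \eqref{E:mainmat} positive semi-definite. That matrix is exactly the $2\times 2$ Pick matrix of the one-variable two-point problem with nodes $X_{1,l}',X_{2,l}'$ and target values $u_1,u_2$, so by classical Nevanlinna--Pick theory that problem has a solution which is a finite Blaschke product $B$ of degree equal to the rank of the matrix (with $B$ a unimodular constant $c=u_1=u_2$ in the rank-zero case). Then $G:=(B\circ\pi_l)\,H$ is a product of the rational inner functions $B\circ\pi_l$ and $H$, hence rational inner, and one has $G(X_j')=B(X_{j,l}')H(X_j')=u_jH(X_j')=w_j'$ and $G(0)=B(0)H(0)=0$, the latter because $H(0)=0$ in both admissible forms. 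This last fact is where deficiency in degree is used: for $H=z^{\nu(Q)}\widetilde{Q}(1/z)/Q(z)$, the numerator $z^{\nu(Q)}\widetilde{Q}(1/z)$ is a genuine polynomial (every exponent occurring in $Q$ is $\leq\nu(Q)$ componentwise) taking the value $\overline{a_{\nu(Q)}}$ at the origin, so, since $Q(0)\neq 0$, the condition $H(0)=0$ is equivalent to $\nu(Q)\notin\mathsf{supp}(Q)$. Undoing the normalization, $F:=\psi_{w_3}^{-1}\circ G\circ\Psi_{X_3}$ is a rational inner interpolant of the original data, and it has the form stated in $a)$ or $b)$ according as the rank of the matrix in \eqref{E:mainmat} is $0$ or $r\in\{1,2\}$.

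For the ``only if'' direction, given such a $G$ I would invoke the two factorization results for rational inner functions recalled in Section~\ref{S:prelim-res} to write $G$ as a finite product of a unimodular constant, of coordinate functions $z_j$, and of functions $z^{\nu(Q_i)}\widetilde{Q_i}(1/z)/Q_i(z)$ with each $Q_i$ irreducible and without zeros in $\D^n$, each factor being rational inner. Since $G(0)=0$, at least one non-constant factor vanishes at the origin; call it $H$. If $H=z_j$ it is of the first listed form; if $H=z^{\nu(Q)}\widetilde{Q}(1/z)/Q(z)$, then by the computation above $H(0)=0$ forces $Q$ to be deficient in degree, so $H$ is of the second. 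Now $\widetilde{G}:=G/H$ is rational inner, hence in the Schur class, and $\widetilde{G}(X_j')=w_j'/H(X_j')=:u_j\in\overline{\D}$ for $j=1,2$ --- here one uses that rational inner functions have no poles in $\D^n$, so $H(X_j')=0$ can occur only when $w_j'=0$, in which case $w_j'/H(X_j')$ is to be read as the value of $G/H$ at $X_j'$. Finally I would invoke the Schwarz--Pick inequality on the polydisc: for any $g$ in the Schur class of $\D^n$ and any $a,b\in\D^n$, $\rho(g(a),g(b))\leq\max_{1\le l\le n}\rho(a_l,b_l)$, with $\rho$ the pseudohyperbolic distance. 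Applying this to $\widetilde{G}$ at $X_1',X_2'$ and choosing $l$ to attain the maximum on the right (such an $l$ has $X_{1,l}'\neq X_{2,l}'$, as $X_1'\neq X_2'$), we get $\rho(u_1,u_2)\leq\rho(X_{1,l}',X_{2,l}')$, which together with $|u_1|,|u_2|\leq 1$ is precisely the positive semi-definiteness of the matrix in \eqref{E:mainmat}.

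I expect the main obstacle to be furnishing the factorization of a rational inner function into irreducible pieces of the two prescribed shapes that the ``only if'' direction needs; this is exactly the pair of auxiliary results of Section~\ref{S:prelim-res}, and once they are available the argument above is largely an assembly of that input with the normalization step and the one-variable Nevanlinna--Pick and polydisc Schwarz--Pick lemmas. The one remaining technical point is the degenerate data in which $w_j=w_3$ for some $j$, where the chosen factor $H$ may be forced to vanish at $X_j'$ and the corresponding ratio in \eqref{E:mainmat} must be read as the value of $G/H$ at $X_j'$; this is routine but should be addressed explicitly.
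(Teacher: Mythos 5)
Your proof is correct and follows essentially the same route as the paper's: normalize so that $(X_3,w_3)\mapsto(0,0)$, extract an irreducible rational inner factor $H$ vanishing at the origin (via the Section~\ref{S:prelim-res} factorization results), and reduce the residual two-point problem to a one-variable Pick problem through the polydisc Schwarz--Pick inequality, which is the content of Lemma~\ref{L:twointer}. The one place you go beyond the paper is in explicitly noting how to read $w_j'/H(X_j')$ when $H(X_j')=0$, namely as the value of $G/H$ at $X_j'$ (forced since then $w_j'=0$); the paper's Case 2 silently assumes $H(X_j')\neq 0$ when it writes $|w_j'/H(X_j')|=|G(X_j')|$.
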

 
In view of Remark \ref{Rm:b}, it follows\,---\,from \cite[Theorem 11.90]{agmac:ftp02}, for
instance\,---\,that a necessary and  sufficient condition for the existence
of a Schur-class interpolant for the data in Theorem \ref{T:mainThm} is the existence of $n$ positive
semi-definite functions $\Gamma_l:\{X_1,X_2,X_3\}\times\{X_1,X_2,X_3\}\longrightarrow\C,\,\,l=1,\dots,n,$
such that
\begin{equation}\label{E:sa}
1-w_j\bar{w}_{k}=\sum_{l=1}^{n}\left(1-X_{j,\,l}\overline{X}_{k,\,l}\right)
\Gamma_{l}\left(X_j,X_k\right),\,\,j,k=1,2,3.
\end{equation}
So, as soon as $n\geq 2,$ the problem of determining the existence of an interpolant reduces to a 
quest for certain (unknown) positive semi-definite functions that satisfy the $6$ conditions in \eqref{E:sa}.
This is the issue alluded to in $(i)$ above. The purpose of presenting the equivalence of
(\rom{1}) and (\rom{3}) in Theorem \ref{T:mainThm} is to contribute to dealing with the issue $(i).$
By Theorem \ref{T:mainThm}, the problem of determining the existence of an interpolant reduces
to a {\em different kind of quest} for positivity, one in which:
\begin{itemize}
 \item some of the parameters of the matrix in \eqref{E:mainmat} are explicitly known;
 \item the unknown parameters range over a class of polynomials of a very special structure.
\end{itemize}

\begin{remark}\label{Rm:m}
%Due to the Remark \ref{Rm:b}, the main content of Theorem \ref{T:mainThm}
%is vested in the equivalence of latter two statements.
The proof of the equivalence of (\rom{2}) and (\rom{3}) has some bearing on the issue stated in $(ii)$ above.
Namely: if, by some method, one has found a rational inner interpolant $F_1$ of very high degree,
the above proof suggests a method to look for a simpler rational inner interpolant $F_2$ for the given data. We refer
the reader to subsection~\ref{SS:impli} as well as to a related discussion therein on how/when the above considerations
arise.
\end{remark}

The alert reader will surmise that the idea of the proof of the implication (\rom{2})\,$\Rightarrow$\,(\rom{3}) is a form of
the Schur algorithm. (Indeed, there are no univariate polynomials that are deficient in degree, owing to
which the matrix in \eqref{E:mainmat} will, for $n = 1,$ be a matrix that the reader will recognize.)
The details behind this observation require some work. Indeed, this article is as much a study of certain properties
of rational inner functions on $\D^n$ as it is about Theorem~\ref{T:mainThm}.
The former is the content of Section~\ref{S:prelim-res}.
The proof of Theorem~\ref{T:mainThm} is given in Section~\ref{S:Thm}.

The reader will also discern that the proof of (\rom{2})\,$\Leftrightarrow$\,(\rom{3}) hints
at an extension\,---\,provided the search for interpolants is in the class of rational inner functions\,---\,for
$N$ points, $N\geq4.$ (We will be brief about this; see Remark \ref{Rm:E} for details).
\medskip

\subsection*{Acknowledgments}
The author wishes to thank Gautam Bharali for the many useful discussions
during the course of this work. He also thanks the anonymous referee of an earlier version of this work for the
many suggestions for improving the exposition\,---\,especially of Section~\ref{S:prelim-res}.
\medskip

\section{Some results about rational inner functions on $\D^n$}\label{S:prelim-res}

In this section, we shall present a couple of results about rational inner functions 
on the polydisc $\mathbb{D}^n.$ We shall make use of the notation introduced prior to 
Theorem~\ref{T:mainThm}. These notations help us present the following 
discussion about rational inner functions on $\D^n.$

\begin{fact}\label{SS:rif}
%Following Rudin \cite[Chapter 5]{rudin:ftp69}, 
An {\em inner function} on $\D^n$ is a function $f\in H^{\infty}(\D^n)$ such that
$\lim_{r\to 1^{-}}|f(rw)|=1$ for almost every $w\in\mathbb{T}^n.$ A {\em rational inner function} on $\D^n$
is an inner function that is rational. It is elementary to see that, given a polynomial
$Q\in\C[z_1,\dots,z_n],$ any function of the form
\[ f(z)=\frac{Az^{\beta}\widetilde{Q}(\frac{1}{z})}{Q(z)},\]
where 
\begin{itemize}
\item $Z(Q)\cap\D^n=\emptyset,$
\item $z^{\beta}\widetilde{Q}(\frac{1}{z})$ is a polynomial,
\item $A$ is a unimodular constant,
\end{itemize}
is a rational inner function. Here, and in what follows, $Z(Q)$ denotes the zero set of $Q.$
Moreover, it is a fact \cite[Theorem 5.2.5]{rudin:ftp69} that every rational inner function on $\D^n$
has the above form.
\end{fact}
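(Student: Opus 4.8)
The substantive claim is the forward direction: every $f$ of the displayed form is a rational inner function; the converse (that every rational inner function on $\D^n$ arises this way) is the cited result \cite[Theorem 5.2.5]{rudin:ftp69}, which I would simply invoke. So fix a polynomial $Q\in\C[z_1,\dots,z_n]$ with $Q\not\equiv 0$ and $Z(Q)\cap\D^n=\emptyset$, a unimodular $A$, and a multi-index $\beta$ such that $P(z):=Az^{\beta}\widetilde{Q}(\frac{1}{z})$ is a polynomial. Since $Q$ is zero-free on $\D^n$, the rational function $f=P/Q$ is holomorphic on $\D^n$; what must be checked are the two defining properties of an inner function, namely $f\in H^{\infty}(\D^n)$ and the unimodularity of the radial boundary values at a.e.\ point of $\mathbb{T}^n$.

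The first (and essentially only computational) step is the algebraic identity on the distinguished boundary. For $z\in\mathbb{T}^n$ one has $1/z_j=\overline{z_j}$ for each $j$, so, writing $Q(z)=\sum_{\alpha}a_{\alpha}z^{\alpha}$,
\[
 \widetilde{Q}\!\left(\tfrac{1}{z}\right)=\sum_{\alpha}\overline{a_{\alpha}}\,z^{-\alpha}=\sum_{\alpha}\overline{a_{\alpha}}\,\overline{z^{\alpha}}=\overline{Q(z)},\qquad z\in\mathbb{T}^n.
\]
As $|z^{\beta}|=1$ and $|A|=1$ on $\mathbb{T}^n$, this gives $|f(z)|=|Q(z)|/|Q(z)|=1$ for every $z\in\mathbb{T}^n$ with $Q(z)\neq 0$. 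Because $Q\not\equiv 0$, the set $Z(Q)\cap\mathbb{T}^n$ has Haar measure zero in $\mathbb{T}^n$; and at any $w\in\mathbb{T}^n\setminus Z(Q)$ the function $f$ is continuous (it is rational with non-vanishing denominator near $w$), so $\lim_{r\to 1^-}f(rw)=f(w)$ with $|f(w)|=1$. Thus the required boundary behaviour holds as soon as one knows $f$ is bounded on $\D^n$.

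For boundedness I would pass to one-dimensional analytic discs. Fix $z_0=(z_{0,1},\dots,z_{0,n})\in\D^n$ and set $\phi:=(\psi_{-z_{0,1}},\dots,\psi_{-z_{0,n}})$, with $\psi_a$ the automorphism in \eqref{E:aaut}; then $\phi$ is a rational map, holomorphic on a neighbourhood of $\overline{\D}$, with $\phi(0)=z_0$, $\phi(\D)\subset\D^n$ and $\phi(\bdy\D)\subset\mathbb{T}^n$. The composite $g:=f\circ\phi$ is a rational function of one variable with no poles in $\D$ (since $\phi(\D)\subset\D^n$ and $Q$ is zero-free there); moreover $Q\circ\phi\not\equiv 0$, for otherwise $Q$ would vanish on the analytic disc $\phi(\D)\subset\D^n$, contradicting the hypothesis. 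Hence $g$ has at most finitely many poles in $\overline{\D}$, all lying on $\bdy\D$. But a pole at some $\zeta_0\in\bdy\D$ would force $|g(\zeta)|\to\infty$ as $\zeta\to\zeta_0$, which is impossible because $|g|\equiv 1$ on $\bdy\D$ off the finite zero set of $Q\circ\phi$, by the previous paragraph. Therefore $g$ is in fact holomorphic on a neighbourhood of $\overline{\D}$ with $|g|=1$ on $\bdy\D$, and the one-variable maximum modulus principle yields $|f(z_0)|=|g(0)|\le 1$. Since $z_0\in\D^n$ was arbitrary, $f\in H^{\infty}(\D^n)$; together with the preceding paragraph, $f$ is a rational inner function.

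I expect the boundedness step to be the only non-routine point: because $Q$ may well vanish somewhere on $\mathbb{T}^n$, the function $f$ need not extend continuously to $\overline{\D^n}$, so one cannot directly quote the maximum principle on the polydisc, and the reduction to one-variable discs (or an equivalent normal-families argument) is what makes it go through. The boundary identity, the measure-zero observation, and the appeal to the one-variable maximum principle are all straightforward.
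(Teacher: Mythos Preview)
Your proof is correct. Note, however, that the paper does not actually supply a proof of this Fact: it is presented as background, with the forward direction dismissed as ``elementary to see'' and the converse simply attributed to \cite[Theorem 5.2.5]{rudin:ftp69}. So there is no ``paper's own proof'' against which to compare your argument; rather, you have filled in the details of the claim the paper calls elementary.

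Your argument is sound on both counts. The identity $\widetilde{Q}(1/z)=\overline{Q(z)}$ on $\mathbb{T}^n$ is exactly the right computation, and the observation that $Z(Q)\cap\mathbb{T}^n$ has measure zero (since $\mathbb{T}^n$ is a determining set for polynomials, so $Q|_{\mathbb{T}^n}\not\equiv 0$) handles the boundary behaviour. For boundedness, your reduction to one-variable analytic discs $\phi:\overline{\D}\to\overline{\D^n}$ with $\phi(\bdy\D)\subset\mathbb{T}^n$ is a clean way around the difficulty you correctly flag, namely that $Q$ may vanish on $\mathbb{T}^n$ so $f$ need not extend continuously to $\overline{\D^n}$. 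The pole-exclusion step (a pole of the one-variable rational function $g=f\circ\phi$ on $\bdy\D$ would contradict $|g|=1$ on $\bdy\D$ off a finite set) is the only place requiring a moment's thought, and you handle it properly.
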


The next two results are central to proving Theorem~\ref{T:mainThm}, and also of independent interest.

\begin{proposition}\label{P:factprop}
Let $f$ be a nonconstant rational inner function of the form
${z^{\nu(Q)}\widetilde{Q}(\frac{1}{z})}/{Q(z)},$
where $Q$ is a nonconstant polynomial in $\Cn$ such that $Z(Q)\cap\D^n=\emptyset.$ Then:
\begin{itemize}
\item[$(a)$] There exist a nonconstant polynomial $\que$ with $Z(\que)\cap\D^n=\emptyset$
and a unimodular constant $C$ such that $f$ can also be expressed as
\begin{equation}
 f(z) = C\,\frac{z^{\nu(\que)}\widetilde{\que}(\frac{1}{z})}{\que(z)}, \label{E:altform}
\end{equation}
and such that the numerator and the denominator of the above expression have no (nonconstant) irreducible
polynomial factors in common.

\item[$(b)$] There exist rational
inner functions $f_1,f_2\in\hol(\D^n),$ both nonunits in $\hol(\D^n),$
such that $f=f_1f_2$ in $\D^n$ if and only if $\que$ is reducible in $\C[z_1,\dots,z_n].$ 
\end{itemize}
\end{proposition}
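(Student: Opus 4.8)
The plan is to prove part $(a)$ first and then use it to establish part $(b)$.

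For part $(a)$, the starting point is the identity $z^{\nu(Q)}\widetilde{Q}(\tfrac1z) = \widetilde{Q}^{\,\flat}(z)$, a genuine polynomial by the definition of $\nu(Q)$; I would first record the elementary relation between the coefficients of $Q$ and those of $\widetilde{Q}^{\,\flat}$, namely that reflection in the multi-degree $\nu(Q)$ sends $a_\alpha \mapsto \overline{a_{\nu(Q)-\alpha}}$ (with the convention that coefficients outside the relevant box vanish). The key observation is that if $p$ is a nonconstant irreducible common factor of $Q$ and $z^{\nu(Q)}\widetilde{Q}(\tfrac1z)$, then the reflection $p^\star(z) := z^{\nu(p)}\widetilde{p}(\tfrac1z)$ is (up to a unit) also a common factor: indeed $Q = p\,r$ forces $z^{\nu(Q)}\widetilde{Q}(\tfrac1z) = p^\star\, r^\star$ by multiplicativity of the reflection operation, and since $p$ also divides $z^{\nu(Q)}\widetilde{Q}(\tfrac1z)$, unique factorization in $\C[z_1,\dots,z_n]$ gives that $p \mid r^\star$, hence $p^\star \mid r$, hence $p^\star \mid Q$. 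Here one must check that $p^\star$ is again irreducible (reflection is a degree-preserving involution up to units on polynomials not divisible by any $z_j$) and that neither $p$ nor $p^\star$ is divisible by a coordinate function — the latter because $Z(Q)\cap\D^n=\emptyset$ and, crucially, $Q$ itself is not divisible by any $z_j$, for otherwise $z^{\nu(Q)}\widetilde{Q}(\tfrac1z)$ would fail to be a polynomial or $f$ would have a pole. Cancelling all such common irreducible factors (and their reflections) from both $Q$ and its reflected numerator, one is left with a polynomial $\que$; the bookkeeping shows the quotient has the form $C\, z^{\nu(\que)}\widetilde{\que}(\tfrac1z)/\que(z)$ for a unimodular constant $C$ (the constant appears because cancelling $p$ against $p^\star$ may differ from cancelling $p^\star$ against $p$ by the unit relating $\widetilde{p^\star}$ and $p$), and $Z(\que)\cap\D^n = \emptyset$ since $\que \mid Q$. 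That $\que$ is nonconstant uses that $f$ is nonconstant: if $\que$ were constant, $f$ would reduce to a unimodular constant.

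For part $(b)$, the direction $(\Leftarrow)$ is the easy one: if $\que = \que_1\que_2$ is a nontrivial factorization into polynomials with no zeros in $\D^n$ (which they inherit from $\que$), then $f_i(z) := C_i\, z^{\nu(\que_i)}\widetilde{\que_i}(\tfrac1z)/\que_i(z)$ are rational inner functions by Fact~\ref{SS:rif}, with a suitable choice of unimodular constants $C_1, C_2$ with $C_1 C_2 = C$ making $f = f_1 f_2$; one checks $\nu(\que) = \nu(\que_1) + \nu(\que_2)$ for generic slices so the reflected numerators multiply correctly, and neither $f_i$ is a unit in $\hol(\D^n)$ since $\que_i$ is nonconstant and, being a factor of $\que$, shares no common factor with $z^{\nu(\que)}\widetilde{\que}(\tfrac1z)$ — so $f_i$ is nonconstant, hence a nonunit.

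The converse $(\Rightarrow)$ is where I expect the main difficulty. Suppose $f = f_1 f_2$ with $f_1, f_2$ nonconstant rational inner functions. By Fact~\ref{SS:rif} each $f_i = A_i z^{\beta_i}\widetilde{Q_i}(\tfrac1z)/Q_i$, and applying part $(a)$ to each $f_i$ we may assume each is already in reduced form $C_i z^{\nu(\que_i)}\widetilde{\que_i}(\tfrac1z)/\que_i$ with numerator and denominator coprime. Multiplying, $f = (C_1 C_2)\, \big(z^{\nu(\que_1)}\widetilde{\que_1}(\tfrac1z)\big)\big(z^{\nu(\que_2)}\widetilde{\que_2}(\tfrac1z)\big) \big/ (\que_1 \que_2)$. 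The subtle point is to pass from this expression to the conclusion that $\que_1\que_2$ equals $\que$ up to a unit — i.e., that the product of the two reduced forms is itself reduced, so no further cancellation occurs. This is the crux: I would argue that if an irreducible $p$ divided both $\que_1\que_2$ and the product of reflected numerators, then (say) $p \mid \que_1$ and $p \mid z^{\nu(\que_1)}\widetilde{\que_1}(\tfrac1z) \cdot z^{\nu(\que_2)}\widetilde{\que_2}(\tfrac1z)$; using that $f_1$ is reduced, $p \nmid z^{\nu(\que_1)}\widetilde{\que_1}(\tfrac1z)$, so $p \mid z^{\nu(\que_2)}\widetilde{\que_2}(\tfrac1z)$, whence $p^\star \mid \que_2$ by the reflection argument from part $(a)$. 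Now $p \mid \que_1$ and $p^\star \mid \que_2$, so $p$ would be a zero-set contributor to $\que = \que_1\que_2$ that survives reduction — but one must reconcile this with the fact that $\que$ is the reduced form of $f$. The resolution is that any such $p, p^\star$ can be matched up and cancelled, which would contradict that $f_i$ were already reduced unless $p$ and $p^\star$ are associates and this cancellation is "internal" to a single $f_i$; a careful case analysis (using irreducibility of the pieces and uniqueness of the reduced form up to units, which itself should be noted as following from part $(a)$) shows this cannot happen nontrivially, so $\que = \text{unit}\cdot\que_1\que_2$, exhibiting $\que$ as reducible. Throughout, the recurring technical hazard is keeping track of the unimodular constants and the possibility that a polynomial is associate to its own reflection; I would isolate the reflection operation's multiplicativity and its behavior on units as a short preliminary lemma to keep the main argument clean.
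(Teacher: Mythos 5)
Your argument for part $(a)$ is correct and takes a genuinely different route from the paper's. You in effect set $\que = Q/D$ where $D = \gcd\bigl(Q,\, z^{\nu(Q)}\widetilde{Q}(\frac{1}{z})\bigr)$, exploiting that the reflection $p\mapsto z^{\nu(p)}\widetilde{p}(\frac{1}{z})$ is a multiplicative involution on polynomials with $p(0)\neq 0$, so that $D$ is (up to a unimodular unit) reflection-invariant and the quotient again has reflected form. The paper instead factors $Q$ into irreducibles $Q_i$, isolates the set $\mathcal{S}$ of indices for which $z^{\nu(Q_i)}\widetilde{Q}_i(\frac{1}{z})/Q_i$ is a unimodular constant, and sets $\que=\prod_{i\notin\mathcal{S}}Q_i$; establishing coprimality of numerator and denominator there relies on Lemma~\ref{L:nu}. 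Your route avoids Lemma~\ref{L:nu} in $(a)$ and is cleaner at this step, while the paper's route makes the irreducible-factor structure of $\que$ explicit, which it then reuses in $(b)$.

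The ``if'' direction of $(b)$ matches the paper. In your ``only if'' direction the first step already contains a gap. You invoke part $(a)$ to put each factor $f_i$ into the form $C_i\,z^{\nu(\que_i)}\widetilde{\que_i}(\frac{1}{z})/\que_i$, but $(a)$ applies only to rational inner functions of the special shape $z^{\nu(Q)}\widetilde{Q}(\frac{1}{z})/Q$, i.e.\ with $\beta=\nu(Q)$ in Fact~\ref{SS:rif}. A rational inner factor need not have this shape: $f_1(z)=z_1$ has $Q_1\equiv 1$, $\nu(Q_1)=0$, $\beta_1=(1,0,\dots,0)$, and admits no representation $C_1 z^{\nu(\que_1)}\widetilde{\que_1}(\frac{1}{z})/\que_1$ with $\que_1$ nonconstant; more generally an extra monomial $z^{\beta_i-\nu(Q_i)}$ can survive and $(a)$ says nothing about it. This is repairable\,---\,one can check that the reduced numerator $z^{\nu(\que)}\widetilde{\que}(\frac{1}{z})$ is divisible by no coordinate $z_j$ (a consequence of $\que(0)\neq 0$ and the definition of $\nu$), so $f=f_1f_2$ forbids stray monomial factors in the $f_i$\,---\,but you do not make that argument. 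The paper sidesteps the issue by keeping $P_i := A_i z^{\beta_i}\widetilde{Q}_i(\frac{1}{z})$ whole (monomial included), using Lemma~\ref{L:nu} to note $P_1,P_2$ vanish in $\D^n$, cancelling common factors of $P_1$ with $Q_2$ and of $P_2$ with $Q_1$, and deducing $p_1p_2\mid z^{\nu(\que)}\widetilde{\que}(\frac{1}{z})$, hence reducibility of $\que$ via Lemma~\ref{L:conirr}.

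Your final step\,---\,that the product $\que_1^\star\que_2^\star/(\que_1\que_2)$ of the two reduced forms is itself reduced\,---\,is gestured at (``a careful case analysis shows this cannot happen'') rather than proved, and since it is the crux you should supply it. The missing observation: if an irreducible $p$ divides $\que_1$ and also $\que_2^\star:=z^{\nu(\que_2)}\widetilde{\que_2}(\frac{1}{z})$, then $Z(p)\cap\D^n=\emptyset$ (from $\que_1$) and $p^\star\mid\que_2$ gives $Z(p^\star)\cap\D^n=\emptyset$ as well; thus $p^\star/p$ is a rational inner function with no zero in $\D^n$, hence constant by Lemma~\ref{L:nu}, so $p$ and $p^\star$ are associates\,---\,but then $p\mid\que_1$ and $p\mid\que_1^\star$, contradicting the coprimality from $(a)$. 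That closes your argument, but it is not in your write-up.
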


We call a nonconstant rational inner function $f$ on $\D^n$ an {\em irreducible inner function} (resp., {\em reducible})
if we cannot (resp., can) express it as $f=gh,$ where $g$ and $h$ are rational inner functions and nonunits
in $\hol(\D^n).$
We now have the following corollary to Proposition~\ref{P:factprop}.

\begin{corollary}\label{C:irdefpol}
Let $f$ be an irreducible rational inner function such that $f(0)=0.$ Then, either
 $f(z) = z_j$ for some $j\in \{1,\dots, n\}$, or it has the form (modulo scaling by a unimodular constant)
\[
 f(z) = z^{\nu(\que)}\widetilde{\que}\left(\frac{1}{z}\right)\big/\que(z),
\]
where $\que$ is an irreducible polynomial having no zeros in $\D^n$ and is deficient in degree.
\end{corollary}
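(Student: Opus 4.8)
The plan is to derive Corollary~\ref{C:irdefpol} from Proposition~\ref{P:factprop} together with Fact~\ref{SS:rif} and the elementary structure theory of rational inner functions vanishing at the origin. Let $f$ be an irreducible rational inner function with $f(0)=0$. By Fact~\ref{SS:rif}, $f$ has the form $Az^{\beta}\widetilde{Q}(1/z)/Q(z)$ with $Z(Q)\cap\D^n=\emptyset$ and $Az^{\beta}\widetilde{Q}(1/z)$ a genuine polynomial; the condition $f(0)=0$ forces the numerator to vanish at $0$. The first step is to normalize this representation so that the multi-index in front of $\widetilde{Q}(1/z)$ is exactly $\nu(Q)$, i.e.\ to show $f$ can be written as $z^{\nu(Q)}\widetilde{Q}(1/z)/Q(z)$ up to a unimodular scalar; this should follow from a degree count on each variable separately, using the definition of $\nu_j(Q)$ as the generic $\zeta$-degree of the slice polynomial, which pins down the exponent $\beta_j$ needed to clear denominators. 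Once $f$ is in this canonical form, I can apply Proposition~\ref{P:factprop}.

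Next I would pass to the reduced form $\que$ supplied by Proposition~\ref{P:factprop}(a): there is a nonconstant polynomial $\que$ with $Z(\que)\cap\D^n=\emptyset$ and a unimodular $C$ with $f=C\,z^{\nu(\que)}\widetilde{\que}(1/z)/\que(z)$, and the numerator and denominator of this expression share no nonconstant irreducible factor. Since $f$ is irreducible \emph{as a rational inner function}, Proposition~\ref{P:factprop}(b) tells us $\que$ must be irreducible in $\C[z_1,\dots,z_n]$ — otherwise $f$ would factor as a product of two rational inner non-units. So $\que$ is an irreducible polynomial with no zeros in $\D^n$. It remains to settle the dichotomy: either $\que$ is (a scalar multiple of) one of the coordinate variables $z_j$, in which case $f(z)=z_j$ up to a unimodular constant; or $\que$ is deficient in degree, i.e.\ $\nu(\que)\notin{\sf supp}(\que)$.

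To get that dichotomy I would argue as follows. Suppose $\que$ is \emph{not} deficient in degree, so $\nu(\que)\in{\sf supp}(\que)$; I want to conclude $\que$ is (up to scalar) a single coordinate $z_j$. The key observation is that $f(0)=0$ means the numerator $z^{\nu(\que)}\widetilde{\que}(1/z)$ vanishes at $0$. Now $z^{\nu(\que)}\widetilde{\que}(1/z)$ is, coefficientwise, the ``reflection'' of $\que$ across the box with corner $\nu(\que)$: the coefficient of $z^{\alpha}$ in the numerator is $\overline{a_{\nu(\que)-\alpha}}$ whenever $\nu(\que)-\alpha$ has nonnegative entries, and $0$ otherwise. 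Its constant term is therefore $\overline{a_{\nu(\que)}}$, which is nonzero precisely because $\nu(\que)\in{\sf supp}(\que)$. But $f(0)=0$ forces that constant term to be $0$ unless the denominator $\que(0)=a_0$ also vanishes; since $Z(\que)\cap\D^n=\emptyset$ in particular $\que(0)\ne 0$, a contradiction — \emph{except} that we must account for common factors being cancelled. This is the subtle point, and it is exactly where the ``no common irreducible factor'' clause from Proposition~\ref{P:factprop}(a) does the work: because numerator and denominator are coprime, a zero of $f$ at $0$ genuinely is a zero of the numerator and not a cancelled pole. So $\nu(\que)\in{\sf supp}(\que)$ combined with coprimality and $f(0)=0$ is contradictory \emph{unless} the reflection identification degenerates, which happens only when $\que$ is a monomial; irreducibility of $\que$ then forces $\que=c\,z_j$, giving $f(z)=(\text{unimodular})\,z_j$. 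I would present the monomial case cleanly by noting that if $\que$ is a monomial then $Z(\que)\cap\D^n=\emptyset$ forces $\que=c$ constant (excluded, $f$ nonconstant) or $\que=c\,z_j^{m}$, and irreducibility forces $m=1$.

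The main obstacle I anticipate is the normalization step and the bookkeeping around the reflection $z^{\nu(\que)}\widetilde{\que}(1/z)$ — specifically making rigorous the claim that this expression is a polynomial with the stated coefficients and that its constant term is $\overline{a_{\nu(\que)}}$, and ensuring the slice-degree definition of $\nu_j$ really does guarantee $\widetilde{\que}(1/z)$ is cleared with no spurious over- or under-shooting in each variable. This is genuine multi-index combinatorics rather than a deep idea, but it is where an error would most naturally creep in; everything conceptual is already packaged in Proposition~\ref{P:factprop}, so the corollary is essentially a careful unwinding of that proposition in the special case $f(0)=0$.
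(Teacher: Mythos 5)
Your plan has the right high-level shape — reduce to Proposition~\ref{P:factprop}, exploit that the denominator of~\eqref{E:altform} does not vanish at $0$, and read off deficiency in degree from the constant term of the numerator — but the normalization step at the start is a genuine gap, and the dichotomy is located in the wrong place.

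The claim that ``$f$ can be written as $z^{\nu(Q)}\widetilde{Q}(1/z)/Q(z)$ up to a unimodular scalar'' and that this ``should follow from a degree count'' is false. Degree counting only tells you that $\beta\geq\nu(Q)$ componentwise (this is what is needed for $z^\beta\widetilde{Q}(1/z)$ to be a polynomial); it does not force equality, and in fact equality can genuinely fail. The cleanest counterexample is $f(z)=z_1$ itself: here one must take $Q$ constant and $\beta=(1,0,\ldots,0)$, whereas $\nu(Q)=0$, and there is no choice of $Q$ that produces $z_1 = z^{\nu(Q)}\widetilde{Q}(1/z)/Q(z)$. So the very case $f=z_j$ that the corollary singles out is precisely the one your normalization step silently excludes. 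The correct way to get to the Proposition's hypotheses is to split $f = Az^{\beta-\nu(Q)}\cdot\bigl(z^{\nu(Q)}\widetilde{Q}(1/z)/Q(z)\bigr)$ and then use \emph{irreducibility} of $f$ (not degree counting) together with Lemma~\ref{L:nu}: if the second factor is nonconstant it is a nonunit, so the monomial factor must be a unit, forcing $\beta=\nu(Q)$; otherwise $f$ is a unimodular constant times the monomial $z^{\beta-\nu(Q)}$, and irreducibility plus $f(0)=0$ gives $|\beta-\nu(Q)|=1$, i.e.\ the $z_j$ case.

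Because of this misplacement, the later part of your argument chases the $z_j$ case in the wrong location. Once you are in the regime where Proposition~\ref{P:factprop}(a) applies, $\que$ can never be a nonconstant monomial: a nonconstant monomial vanishes at $0\in\D^n$, contradicting $Z(\que)\cap\D^n=\emptyset$. So there is no ``degenerate reflection'' subcase to worry about; $\nu(\que)\in{\sf supp}(\que)$ simply contradicts $f(0)=0$ outright (the constant term of $z^{\nu(\que)}\widetilde{\que}(1/z)$ is $\overline{a_{\nu(\que)}}$, and $\que(0)\neq 0$), and hence $\que$ is always deficient in degree in this regime. Relatedly, your appeal to the coprimality from (a) to rule out ``cancelled poles'' at $0$ is not doing any work: the denominator is nonzero at $0$ regardless, so the constant-term argument needs only $Z(\que)\cap\D^n=\emptyset$. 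Coprimality is what you need for Proposition~\ref{P:factprop}(b) to translate ``$f$ irreducible as an inner function'' into ``$\que$ irreducible as a polynomial,'' which you do invoke correctly.
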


The corollary is immediate from Proposition \ref{P:factprop} and Fact~\ref{SS:rif} 
once we realize that the numerator of the rational inner function given by \eqref{E:altform}
{\em cannot} vanish at $0$ if $\nu(\que)\in {\sf supp}(\que)$.
We shall not write down the (essentially trivial, in view of Proposition \ref{P:factprop}) proof of this
corollary.
\smallskip

The proof of Proposition \ref{P:factprop} depends on a few lemmas. The first of these
states a simple factorization property associated to 
$Q$ and $z^{\nu(Q)}\widetilde{Q}(\frac{1}{z}).$

\begin{lemma}\label{L:conirr}
Let $Q$ be a nonconstant polynomial such that $Q(0)\not =0.$
Then $z^{\nu(Q)}\widetilde{Q}(\frac{1}{z})$ is irreducible 
in $\C[z_1,\dots,z_n]$ if and only if $Q$ is irreducible in $\C[z_1,\dots,z_n].$
\end{lemma}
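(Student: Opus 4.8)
The plan is to set up a ring isomorphism that carries the polynomial $Q(z)$ to (a scalar multiple of) the polynomial $z^{\nu(Q)}\widetilde{Q}(\frac1z)$, thereby transferring irreducibility between the two. First I would dispose of the conjugation: the map $Q \mapsto \widetilde{Q}$ is a ring automorphism of $\C[z_1,\dots,z_n]$ (it is the automorphism induced by complex conjugation of coefficients, which is semilinear over $\C$ but still multiplicative), so $Q$ is irreducible if and only if $\widetilde{Q}$ is. Hence it suffices to prove the statement with $\widetilde{Q}$ in place of $Q$ on the left-hand side, i.e. to show that $z^{\nu(Q)}\widetilde{Q}(\frac1z) = z^{\nu(\widetilde{Q})}\,\widetilde{\widetilde{Q}}(\frac1z)$-type expression is irreducible iff $\widetilde Q$ is. So, renaming, the real content is: for a nonconstant $P$ with $P(0)\neq 0$, the ``reversal'' $P^{\star}(z) := z^{\nu(P)}P(\frac1z)$ is irreducible iff $P$ is.

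The key step is to recognize $P \mapsto P^{\star}$ as essentially the involution coming from the monomial substitution $z_j \mapsto 1/z_j$ followed by clearing denominators. Concretely, in the Laurent polynomial ring $\C[z_1^{\pm 1},\dots,z_n^{\pm 1}]$ the substitution $\sigma: z_j \mapsto z_j^{-1}$ is a ring automorphism. For $P$ with $P(0)\neq 0$, one checks that $\nu_j(P)$ is exactly the $z_j$-degree of $P$ for generic values of the other variables, and crucially that $z^{\nu(P)}\sigma(P)$ is again an honest polynomial with nonzero constant term — here is where the hypothesis $P(0)\neq 0$ enters, guaranteeing that the monomial $z^{\nu(P)}$ is precisely the one needed to clear all negative powers, and that after clearing, the corner monomial $z^{\nu(P)}$ survives with the coefficient $P(0)\neq 0$, so $(P^\star)(0) = (\text{leading coeff of }P) \neq 0$ as well and $\nu(P^\star) = \nu(P)$. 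Thus $\star$ is an involution on $\{R : R(0)\neq 0\}$: $(P^\star)^\star = P$. Now suppose $P^\star$ factors nontrivially as $P^\star = G\cdot H$ in $\C[z_1,\dots,z_n]$. Since $P^\star(0)\neq 0$, neither $G$ nor $H$ vanishes at $0$, so applying $\star$ to both sides — using that $\sigma$ is multiplicative and that the clearing monomials multiply correctly (one needs $\nu(GH) = \nu(G)+\nu(H)$, which holds because $G(0),H(0)\neq 0$ so there is no cancellation of top-degree-in-$z_j$ terms generically) — gives $P = (P^\star)^\star = G^\star \cdot H^\star$, a nontrivial factorization of $P$. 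Running the argument in the other direction (again legitimate because $\star$ is an involution and preserves non-vanishing at $0$) shows $P$ reducible $\iff$ $P^\star$ reducible, equivalently $P$ irreducible $\iff$ $P^\star$ irreducible, since units in $\C[z_1,\dots,z_n]$ are the nonzero constants, which are manifestly fixed by $\star$ up to scalar.

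The main obstacle, and the step deserving the most care, is the bookkeeping around $\nu$: showing that for $P$ with $P(0)\neq 0$ the expression $z^{\nu(P)}P(\frac1z)$ really is a polynomial (no leftover negative exponents), that its constant term is nonzero, that $\nu(z^{\nu(P)}P(\frac1z)) = \nu(P)$, and — the genuinely substantive point — that $\nu$ is additive on products of polynomials that are nonzero at the origin, so that $\star$ respects factorizations. This last additivity is what makes $\star$ multiplicative rather than merely ``multiplicative up to a monomial factor,'' and it is exactly here that $Q(0)\neq 0$ is indispensable (for a polynomial vanishing at $0$ the reversal can drop degree and the correspondence breaks). Everything else is the standard observation that an automorphism of a UFD, or an involution built from one, preserves irreducibility.
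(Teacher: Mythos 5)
Your overall strategy is the same as the paper's: pass through the Laurent substitution $z_j\mapsto z_j^{-1}$ and transfer irreducibility via the ``reversal'' operator $P\mapsto P^{\star}:=z^{\nu(P)}P(1/z)$, using additivity of $\nu$ on products. But there is a genuine error in the bookkeeping you flagged as the delicate point. You claim that for $P$ with $P(0)\neq 0$ the reversal $P^{\star}$ again has nonzero constant term, deducing $(P^{\star})(0)=(\text{coeff.\ of }z^{\nu(P)}\text{ in }P)\neq 0$. This is false: the constant term of $P^{\star}$ is indeed the coefficient $a_{\nu(P)}$ of the corner monomial $z^{\nu(P)}$ in $P$, but nothing forces that coefficient to be nonzero. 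The case $a_{\nu(P)}=0$, i.e.\ $\nu(P)\notin{\sf supp}(P)$, is exactly what the paper calls ``deficient in degree,'' a notion introduced precisely because it occurs and is central to the main theorem (a concrete example: $P(z_1,z_2)=2-z_1-z_2$, where $\nu(P)=(1,1)$ and $P^{\star}=2z_1z_2-z_1-z_2$ vanishes at $0$). You appear to have conflated ``the monomial $z^{\nu(P)}$ appears in $P^{\star}$ with coefficient $P(0)\neq 0$'' (true) with ``$P^{\star}$ has nonzero constant term'' (false).

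The gap propagates: from $(P^{\star})(0)\neq 0$ you conclude that in any nontrivial factorization $P^{\star}=GH$ both $G(0)$ and $H(0)$ are nonzero, which you then need in order to invoke your involution on $\{R:R(0)\neq 0\}$. Without it you have no licence to apply $\star$ to the factors. The statement of the lemma is still correct, and your $\star$-formalism can be salvaged, but differently: one should instead observe that $z_j\nmid P^{\star}$ for every $j$ (since $P^{\star}$ contains a monomial $z^{\nu(P)-\alpha^{*}}$ whose $j$-th exponent is zero whenever $\alpha^{*}$ realizes $\alpha^{*}_j=\nu_j(P)$), so nontrivial factors of $P^{\star}$ cannot be monomials and therefore have nonconstant reversals; and one should use $\nu(P^{\star})=\nu(P)$ (which does hold when $P(0)\neq 0$, but does not follow from your false constant-term claim) to get $(P^{\star})^{\star}=P$. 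This is in effect what the paper does: its invariant is $\nu(P^{\star})=\nu(P)$, never $(P^{\star})(0)\neq 0$. Finally, a small point: $\nu(GH)=\nu(G)+\nu(H)$ holds for arbitrary nonzero $G,H\in\C[z_1,\dots,z_n]$ (we are over an integral domain, so top $z_j$-coefficients cannot cancel); the hypothesis $G(0),H(0)\neq 0$ is not what makes $\nu$ additive.
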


This is an entirely elementary result which depends on the simple calculation that if 
$Q=Q_1Q_2,$ $Q_1$ and $Q_2$ being nonconstant polynomials, then 
\begin{equation}
z^{\nu(Q)}\widetilde{Q}\big(1/z\big)=
[z^{\nu(Q_1)}\widetilde{Q}_1\big(1/z\big)]
[z^{\nu(Q_2)}\widetilde{Q}_2\big(1/z\big)]
\label{E:red1}
\end{equation}
for all $z\in\Cn\setminus(\cup_{j=1}^{n}\{z\in\Cn:z_j=0\}),$ and hence on all of $\Cn.$ 
We shall not dwell any further on this simple matter.
\smallskip

The next two results follow easily if we make use of the work
of Agler--McCarthy--Stankus \cite{agmacsta:ftp06}. 
To use their results, we need to give definitions of several terms from \cite{agmacsta:ftp06}.

\begin{definition}[see Section~2 of \cite{agmacsta:ftp06}]
An {\em algebraic set in $\Cn$} is the intersection of the zero sets of finitely many polynomials 
in $\C[z_1,\dots,z_n].$
\begin{itemize}
\item[$(1)$] Given an algebraic set $A\subset\Cn,$ ${\sf Hol}(A)$ will denote the
algebra of all functions $f:A\rightarrow\C$ such that for each point $z\in A,$ there is an open subset
$U^{z}$ of $\Cn$ containing $z,$ and a holomorphic function $\varphi_{z}$ on $U^{z}$ such that 
$\varphi_{z}|_{U^{z}\cap A}\equiv f|_{U^{z}\cap A}.$
\item[$(2)$] Let $X\subseteq\Cn,$ and let $A$ be an algebraic set in $\Cn.$ We say $X$ is 
{\em determining for $A$} if for every $f\in{\sf Hol}(A)$ with $f|_{X\cap A}\equiv 0$ we have that 
$f\equiv 0.$ 
\item[$(3)$] We say that an algebraic set $A$ is {\em toral} if $\Tn$ is determining for $A$, and that $A$
is {\em atoral} if $\Tn$ is not determining for any of the irreducible components of $A.$
If $Q\in\C[z_1,\dots,z_n],$ we say that $Q$ is {\em toral} (resp.\;{\em atoral}) if $Z(Q)$ is toral (resp.\;atoral).
\end{itemize}
\end{definition}

The empty set is both toral and atoral. Consequently, the nonzero constant polynomials 
are both toral and atoral. We are now in a position to state the next lemma required.

\begin{result}(paraphrasing Corollary $3.2$ of \cite{agmacsta:ftp06})\label{R:factato}
Let $Q\in\C[z_1,\dots,z_n]$ be a nonzero polynomial. There exists a factorization 
\begin{equation}
Q=pq,\label{E:factato}
\end{equation}
where $p$ is toral and $q$ is atoral, and $p$ and $q$ are determined uniquely up to constants.
\end{result}

When $Q$ is a nonconstant polynomial, we call the polynomial $q$
given by the factorization \eqref{E:factato} {\em the atoral factor of $Q$}
(with the understanding that $q$ is determined upto a constant).

\begin{lemma}\label{L:nu}
Let $f$ be a nonconstant rational inner function, and write
\begin{equation}
f(z)=\frac{Az^{\beta}\widetilde{Q}(\frac{1}{z})}{Q(z)},\label{E:RI}
\end{equation}
where $Q$ is nonconstant, and  $A,$ $\beta$ and $Q$ have exactly the meanings and properties
stated under the heading ``Fact~\ref{SS:rif}'' above. 
Then $f$ has a zero in $\D^n.$ In particular, the numerator of \eqref{E:RI} has a zero in $\D^n.$
\end{lemma}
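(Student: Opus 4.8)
The statement is that a nonconstant rational inner function $f$ on $\D^n$, written in the canonical form \eqref{E:RI}, must vanish somewhere in $\D^n$; equivalently, the numerator $Az^{\beta}\widetilde{Q}(\tfrac1z)$ has a zero in $\D^n$. The plan is to argue by contradiction: suppose $f$ is nonvanishing on $\D^n$. Since the denominator $Q$ also has no zeros in $\D^n$ (this is part of the hypothesis carried over from Fact~\ref{SS:rif}), the function $f$ is then a nonvanishing holomorphic function on $\D^n$ with $|f|\le 1$ on $\D^n$.

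The key step is to use the boundary behaviour. Because $f$ is inner, $|f^{*}|=1$ a.e.\ on $\mathbb{T}^n$. I would pass to a slice: for a generic point $\zeta\in\mathbb{T}^{n-1}$, restrict $f$ to the disc $\{(\zeta,z_n):z_n\in\D\}$ — or more robustly, use the standard fact that $f$ restricted to almost every complex line through the origin of the form $z\mapsto f(\lambda z)$, $\lambda\in\mathbb{T}^n$, is a one-variable inner function. Such a one-variable restriction $g$ is then a bounded holomorphic function on $\D$ with unimodular boundary values a.e., hence a (finite or infinite) Blaschke product times a singular inner factor times a unimodular constant. If $f$ never vanishes on $\D^n$, then no such slice $g$ vanishes on $\D$, so each slice is a singular inner function. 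But a singular inner function that is additionally \emph{rational} must be constant (a rational singular inner function on $\D$ would have to have its singular measure supported on finitely many points with rational structure, forcing it to be a unit); since $f$ is rational, each slice is rational, hence each slice is a unimodular constant. Varying the slice and using that $\bigcup_{\lambda}\{\lambda z: z\in\D\}$ contains a determining set for polynomials (indeed it contains an open set), we conclude $f$ is constant on $\D^n$, contradicting the hypothesis that $f$ is nonconstant.

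An alternative, perhaps cleaner, route avoids singular inner functions entirely: if $f$ is nonvanishing on $\D^n$, then $1/f$ is holomorphic on $\D^n$, and on $\mathbb{T}^n$ we have $|1/f^{*}| = 1 = |f^{*}|$ a.e.; thus $1/f$ is also bounded (with $\sup_{\D^n}|1/f|\le 1$ by the maximum principle applied to the rational — hence continuous up to generic boundary points — function, or via an $H^\infty$ argument). Then both $f$ and $1/f$ lie in the closed unit ball of $H^\infty(\D^n)$, which forces $|f|\equiv 1$ on $\D^n$ and hence $f$ constant — again a contradiction. Here the care needed is to justify $\|1/f\|_\infty\le 1$: since $f$ is rational and zero-free on $\D^n$, $1/f = Q / (Az^\beta\widetilde Q(\tfrac1z))$ is rational with denominator nonvanishing on $\D^n$, so $1/f\in H^\infty(\D^n)$, and its a.e.\ unimodular boundary values together with the maximum modulus principle on $\D^n$ give the bound.

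I expect the main obstacle to be the technical justification that a one-variable rational inner function with no zeros in $\D$ is constant — or, in the alternative route, the justification that $1/f\in H^\infty(\D^n)$ with norm at most one, which requires knowing that the rational function $1/f$ has no poles on $\overline{\D^n}$ off a set where it extends continuously, and invoking that its nontangential boundary values are unimodular a.e. Both versions ultimately reduce to the one-variable fact that a finite Blaschke product with no zeros is a unimodular constant, combined with the observation that rationality rules out any singular inner part; organizing the slicing argument so that ``generic slice'' statements assemble into a genuine statement on all of $\D^n$ via a determining-set argument is the one place where one must be slightly careful.
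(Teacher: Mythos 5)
Your first route is in essence the paper's argument: restrict $f$ to diagonal slices $\zeta\mapsto f(\zeta\lambda)$ for $\lambda$ in a full-measure subset of $\mathbb{T}^n$; argue that each such slice is a finite Blaschke product in $\zeta$; conclude that if $f$ were zero-free each slice would be a zero-free Blaschke product, hence a constant equal to $f(0)$; and assemble these slice-wise constancies into $f\equiv f(0)$ via a determining-set argument. The one substantive difference is that you invoke as a black box the fact that almost every diagonal slice of an inner function on $\D^n$ is a one-variable inner function (and then separately the classification of rational one-variable inner functions). The paper avoids this by computing directly from \eqref{E:RI}: for $z\in\mathbb{T}^n$ off the exceptional set $S_1$ where the top-degree homogeneous part of $Q$ vanishes, it factors $Q(\zeta z)=B_z\prod_j(1-a_{z,j}\zeta)$ with each $a_{z,j}\in\overline{\D}$, and reads off that $f(\zeta z)$ is a finite Blaschke product of degree at most $|\beta|$ (treating the factors with $|a_{z,j}|=1$ as unimodular constants). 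This makes the slice-wise claim self-contained without appeal to the general slicing theory of inner functions. One small slip in your write-up: $\bigcup_{\lambda\in\mathbb{T}^n}\{\lambda z: z\in\D\}$ is not an open subset of $\D^n$ when $n\ge 2$ (it is an $(n+1)$-real-dimensional set inside the $2n$-real-dimensional $\D^n$); however, the determining-set argument you give in parallel is the right one, and it is what the paper uses.

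Your second route, working with $1/f$, has a genuine gap at the step $\|1/f\|_\infty\le 1$. For $n\ge 2$, a rational function holomorphic on $\D^n$ with a.e.\ unimodular boundary values on $\mathbb{T}^n$ need not a priori be bounded: the zero set of its denominator can meet $\overline{\D^n}$ in a positive-codimension variety touching the distinguished boundary, and the maximum principle cannot be applied to an unbounded function without some prior growth control (for instance, membership of $1/f$ in the Smirnov class $N^+(\D^n)$, from which $|1/f|\le 1$ would follow via the Poisson integral of $\log|(1/f)^*|$). Establishing that control is not automatic from rationality and the nonvanishing of $f$ on $\D^n$, and you have not supplied it. So as written this alternative does not close; the slicing argument is the one that goes through.
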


\begin{proof}
Writing $f$ as
\[
 f(z) = A z^{\beta-\nu(Q)}\,\frac{z^{\nu(Q)}\widetilde{Q}(\frac{1}{z})}{Q(z)},
\]
we see that it suffices to assume without loss of generality that $\beta-\nu(Q)=0.$
Let $\mathbb{E}:=\{z\in\mathbb{C}: |z|>1\}.$ Assume $f$ does not have any zeros in $\D^n.$ Then 
${z^{\nu(Q)}\widetilde{Q}(\frac{1}{z})}$ does not have any zeros in $\D^n.$ Hence, 
$Q$ cannot have any zeros in $\mathbb{E}^n.$ So $Q$ is a nonconstant polynomial such that
$Z(Q)\cap\D^n=\emptyset$ and $Z(Q)\cap\mathbb{E}^n=\emptyset.$ It follows from
\cite[Theorem 3.5]{agmacsta:ftp06} that $Q$ is toral. Hence, by \cite[Proposition 3.4]{agmacsta:ftp06},
there exists a $c\in \C$ such that $Q(z)=c{z^{\nu(Q)}\widetilde{Q}(\frac{1}{z})}$.
This implies that $f$ is constant, which is a contradiction. 
\end{proof}

We now have all the tools to present the proof of the Proposition \ref{P:factprop}.

\begin{proof}[{\bf Proof of the Proposition \ref{P:factprop}}]
In this proof, all ring-theoretic assertions made {\em without any further qualification will be
for the ring $\C[z_1,\dots,z_n].$}
\smallskip

We factor $Q = pq$ according to Result~\ref{R:factato}. Then, from the discussion leading to the
equation \eqref{E:red1}, and from the argument towards the end of the previous proof\,---\,now
applied to the toral polynomial $p$\,---\,we get\vspace{-1.5mm} 
\begin{equation}\label{E:altform2}
 f(z) = Cz^{\nu(q)}\,\widetilde{q}\,\big(1/z\big)\big/q(z),
\end{equation}%\vspace{-1mm}
where $C$ is some unimodular constant. Moreover, it follows from the argument in the first paragraph
of the proof of Theorem 4.1 in \cite{agmacsta:ftp06} that $q$ does not have any nonconstant irreducible
factor $r$ satisfying the identity $r(z)=c{z^{\nu(r)}\widetilde{r}(\frac{1}{z})}$ ($c$ being a
non-zero constant).
\smallskip

Now, from the third paragraph of the proof of Theorem 4.1 in \cite{agmacsta:ftp06}, we conclude that the
numerator and the denominator of the right-hand side in \eqref{E:altform2} are relatively prime to each other. 
Thus, we have $(a)$, with $\que$ being some choice of the atoral factor (which is determined uniquely up to
a constant) of $Q$.
\smallskip
 
Suppose $\que$ is reducible in $\C[z_1,\dots, z_n].$ Then there exist $q_1, q_2\in
\C[z_1,\dots, z_n]$ which are nonunits such that $\que=q_1q_2.$ As $Z(\que)\cap\D^n=\emptyset,$
we have $Z(q_i)\cap\D^n=\emptyset,\,\,i=1,2.$ By an elementary
calculation that we had alluded to earlier, which leads to \eqref{E:red1}, we have:
\[
z^{\nu(\que)}\,\widetilde{\que}\big(1/z\big)\big/\que(z)
=[z^{\nu(q_1)}\,\widetilde{q}_1\big(1/z\big)\big/q_1(z)]
 [z^{\nu(q_2)}\,\widetilde{q}_2\big(1/z\big)\big/q_2(z)].
\]
The properties of $\que$ enable us to apply Lemma \ref{L:nu} to each of the factors
of the right-hand side in the above equation and to infer that they are nonunits 
of $\mathcal{O}(\D^n).$ Clearly they are rational inner.
This gives us one of the implications in $(b)$.
\smallskip

Now assume there exist $f_1,f_2,$ rational inner and nonunits in $\hol(\D^n)$
such that $f\equiv f_1f_2.$ Owing to Fact~\ref{SS:rif}, we can write
\[
f_i(z) = [A_i z^{\beta_i-\nu(Q_i)}\,][{z^{\nu(Q_i)}\widetilde{Q}_i({1}/{z})}/{Q_i(z)}], \ \ \ 
i=1, 2,
\]
where $A_i,$ $\beta_i$ and $Q_i$ are as described in Fact~\ref{SS:rif}.
Put $P_i(z):= A_i z^{\beta_i}\widetilde{Q}_i(\frac{1}{z}),\,\,i = 1,2.$
In view of $(a)$, we can assume without loss of generality that $\hcf(P_i,Q_i)=1,\,\,i=1,2$
(note that we do {\bf not} require $Q_i$ to be nonconstant to assert this). 
This assumption will be in effect for the remainder of this proof.
\smallskip

Set $\mathcal{P}(z):=z^{\nu(\que)}\widetilde{\que}\left(\frac{1}{z}\right)$. 
Appealing to Lemma~\ref{L:nu} if $Q_i$ is nonconstant, else to the fact that $f_i$ is nonconstant,
we deduce that $P_1$ and $P_2$ have zeros in $\D^n$. We have
\begin{equation}
C\,\frac{\mathcal{P}}{\que}=\frac{P_1P_2}{Q_1Q_2}=\frac{p_1p_2}{q_1q_2}, \label{maineqid}
\end{equation}
where $p_1$ and $q_2$ are obtained by cancelling any common factors that $P_1$ and $Q_2$ might
have; and defining the pair $p_2$ and $q_1$ analogously.  
Any such nonconstant common factor cannot have zeros in $\D^n.$ Hence,
$p_1$ and $p_2$ must have zeros in $\D^n$ and are nonunits in $\C[z_1,\dots,z_n].$  
Now \eqref{maineqid} gives us
\begin{equation*}
C\mathcal{P}q_1q_2 = \que p_1p_2.
\end{equation*}
Hence $p_1p_2|\mathcal{P}q_1q_2.$ As $\hcf(p_1p_2, q_1q_2)=1,$ we have $p_1p_2|\mathcal{P},$
whence $\mathcal{P}$ is reducible. Hence from Lemma \ref{L:conirr}, $\que$ is reducible. This establishes $(b)$.
\end{proof}
\medskip

\section{The proof of Theorem~\ref{T:mainThm}}\label{S:Thm}
Before presenting the proof of Theorem \ref{T:mainThm}, we shall state a few
results that are essential to our proof.

\subsection{A few essential results}\label{SS:Bbt}
The first result that we need is not stated explicitly as a theorem by Knese, but is the content
of the discussion in \cite[Section 4]{Knese:ftp015}.

\begin{result}[Knese, \cite{Knese:ftp015}]\label{Re:Kn}
Let $X_1,X_2,X_3$ be three distinct points in $\D^n,\ n\geq 2,$ and let $w_1,w_2,w_3\in\D.$ Suppose
there exists $\Phi\in H^{\infty}(\D^n)$ such that $\sup_{\D^n}|\Phi|\leq 1$ and
$\Phi(X_j)=w_j,\ j=1,2,3.$ Then there exists $\widetilde{\Phi}$ in the Schur--Agler class such
that $\widetilde{\Phi}(X_j)=w_j,\ j=1,2,3.$
\end{result}

The next result has actually been proved in \cite{agmac:ftp99} for the Schur class on the 
bidisc. However, the result \emph{as stated below} can be seen to follow, along the same
lines as the proof of \cite[Corollary~2.13]{agmac:ftp99}, if one uses the representation theorem
for the Schur--Agler class on the polydisc \cite{Agler:ftp90}. (Since the latter theorem has quite a long
statement, we shall not spell it out.) Also see the remark on p.~193 of \cite{agmac:ftp99}.

\begin{result}[Agler--McCarthy, \cite{agmac:ftp99}]\label{Re:AgMc}
Suppose the problem $(*)$ has a solution in the Schur--Agler class. Then there exists a 
rational inner function that solves $(*).$
\end{result}

In $(*),$ when $n=1$ we note that $X_j\in\D.$ In this case it is known 
that a solution for $(*)$ exists if and only if the matrix
\begin{equation}
{\begin{bmatrix}
\dfrac{1-w_j\bar{w}_k}{1-X_j\overline{X}_{k}}
\end{bmatrix}}_{j,k=1}^{N}\label{E:NP1}
\end{equation}
is positive semi-definite.
The last result needed\,---\,which concerns the one-dimensional Pick--Nevanlinna problem\,---\,is
about the existence of interpolants of a certain special form when the matrix in \eqref{E:NP1}
is positive semi-definite.

\begin{result}[Theorem 6.15, \cite{agmac:ftp02}]\label{Re:Blaint}
Suppose the matrix in \eqref{E:NP1} is positive semi-definite and has rank $r,\,r\leq N.$ Then there exists
a finite Blaschke product of degree $r$ that solves the problem $(*).$
\end{result}
\smallskip

\subsection{The proof of Theorem \ref{T:mainThm} and consequences}
We are now in a position to present the proof of our main theorem.
In the remainder of this section, we will use expressions of the form
``a function that interpolates the data $(X_1,\dots,X_N;w_1,\dots,w_N)$''
to signify the existence of a function, in the stated class, that maps
the data in the manner described by $(*).$
The following lemma is another key tool in our proof of Theorem~\ref{T:mainThm}.

\begin{lemma}\label{L:twointer}
Let $(X_1,w_1),(X_2,w_2)\in\D^n\times\D.$ There exists a holomorphic map in $\hol(\D^n,\D)$
interpolating the data $(X_1,X_2;w_1,w_2)$ if and only if
\begin{equation}
C_{\D^n}(X_1,X_2)\geq C_{\D}(w_1,w_2),\label{E:cath2}
\end{equation}
where $C_{\D^n}$ and $C_{\D}$ denote the Carath{\'e}odory distance on $\D^n$ and $\D$ respectively.
\end{lemma}

We give only a sketch of the proof of the lemma above.
The ``only if'' part follows from the well-known distance decreasing property of the
Carath{\'e}odory distance under holomorphic maps. We refer the reader to \cite[Chapter 2]{Jar:ftp93} for the
definition and the basic properties of the Carath{\'e}odory distance used in this and the next paragraph.
\smallskip

It is a standard calculation that
\[
 C_{\D^n}(X_1,X_2) = \max\{C_{\D}(X_{1,\,j},X_{2,\,j}) : 1\leq j\leq n\}.
\]
So if \eqref{E:cath2} holds true, 
then there exists an $l$, $1\leq l\leq n,$ such that
\[
 C_{\D}(X_{1,\,l}, X_{2,\,l}) \geq C_{\D}(w_1,w_2).
\]
Now, $C_{\D}(\zeta, \eta)$ is just the Poincar{\'e} distance between the points $\zeta, \eta\in \D$.
Thus, by the last inequality we can explicitly construct a function $f$\,---\,which is the conjugation of an appropriate
scaling by an automorphism of $\D$\,---\,such that $f(X_{1,\,l})=w_1$ and $f(X_{2,\,l})=w_2.$
Then $f\circ\pi_l$ interpolates as desired, where $\pi_l$ is the projection onto the $l$-th coordinate.
\smallskip
 
We now have all the tools to present the proof of Theorem~\ref{T:mainThm}.

\begin{proof}[{\bf Proof of Theorem~\ref{T:mainThm}}] 
The implication (\rom{2})\,$\Rightarrow$\,(\rom{1})
is obvious. Now assume (\rom{1}). Then by Result \ref{Re:Kn}
there exists a $\widetilde{\Phi}$ in the Schur--Agler class such that
$\widetilde{\Phi}(X_j)=w_j,\,\,j=1,2,3.$ Hence, by Result \ref{Re:AgMc}
there exists a rational inner function $F$ that interpolates the given data.
This establishes the implication (\rom{1})\,$\Rightarrow$\,(\rom{2}).
\smallskip

The remainder of the proof deals with the equivalence of (\rom{2}) and (\rom{3}).
To this end, let $F\in\hol(\D^n)$ be a rational inner function
that interpolates the data $(X_1,X_2,X_3;w_1,w_2,w_3).$ Then the interpolant $F$ exists if and only if
$\widetilde{F}:=\psi_{w_3}\circ F\circ{\Psi_{X_3}^{-1}},$ which is a rational inner function on $\D^n,$
interpolates the data $(X_1',X_2',0;w_1',w_2',0),$ where $X_1',X_2',w_1'$ and $w_2'$
are as stated in the theorem. Here $\psi_{w_3}$ and $\Psi_{X_3}$ are as introduced in Section~\ref{S:intro}.
\smallskip

\noindent{\bf Claim.} {\em The interpolant $\widetilde{F}$ exists if and only if there exist $H,G,$
both rational inner functions on $\D^n,$ with $H$ having the form described in Theorem~\ref{T:mainThm}, 
such that $G$ interpolates $(X_1',X_2';{w_1'}/{H(X_1')},{w_2'}/{H(X_2')}),$ and such that 
${w_j'}/{H(X_j')}\in\overline{\D}$ for $j=1,2.$}

\noindent The ``if'' part of the above claim is easy to prove. Assume that $G,H$ exist as in the claim. Then take
$\widetilde{F}=GH,$ which has all the desired properties.
\smallskip

To see the ``only if'' part we consider two cases.
In what follows, the adjectives {\em irreducible} and {\em reducible},
applied to $\widetilde{F},$ are as defined prior to Corollary \ref{C:irdefpol}.
\smallskip

\noindent{\bf Case 1.} {\em The interpolant $\widetilde{F}$ is irreducible.}
 
\noindent In this case we take $H=\widetilde{F}$ and $G\equiv 1.$
Note that both are rational inner functions. That $H$ has the form described in Theorem~\ref{T:mainThm}
follows from Corollary \ref{C:irdefpol}.
\smallskip

\noindent{\bf Case 2.} {\em $\widetilde{F}$ is reducible.}
 
\noindent Since $\widetilde{F}$ is reducible, and $\widetilde{F}(0)=0,$ there exist an irreducible rational inner function
$H$ such that $H(0)=0,$ and a rational inner function $G$ such that $\widetilde{F}=GH.$ In view of
Corollary~\ref{C:irdefpol}, $G$ and $H$ have the properties claimed. 
\smallskip

This establishes our Claim.
\smallskip

Let us look closely at the situation in Case~2. Since $X_j'\in\D^n$ for $j=1,2,$
we have $|{w_j'}/{H(X_j')}|=|G(X_j')|<1,\ j=1,2.$
We have used here the fact that $G$ is nonconstant. We have from Lemma~\ref{L:twointer} that the 
existence of $G$ and $H$ as in our Claim leads to
\begin{equation}
C_{\D^n}(X_1',X_2')\geq C_{\D}\left(\frac{w_1'}{H(X_1')},\frac{w_2'}{H(X_2')}\right). \label{E:card}
\end{equation}
As $C_{\D^n}(X_1',X_2')=\max\{C_{\D}(X_{1,\,j}',X_{2,\,j}'):1\leq j\leq n\},$
the inequality \eqref{E:card} is equivalent to 
\begin{equation}
C_{\D}(X_{1,\,l}',X_{2,\,l}')\geq C_{\D}\left(\frac{w_1'}{H(X_1')},\frac{w_2'}{H(X_2')}\right)
\ \ \ \text{for some $l,1\leq l\leq n.$}\label{E:cardco}
\end{equation}
Writing the expression for $C_{\D},$ a simple matricial trick (see \cite[page 7]{garnett:ftp07})
shows that the inequality \eqref{E:cardco} is equivalent to
\begin{equation}
{\begin{bmatrix}
\dfrac{1-({w_j'}/{H(X_j')})(\overline{{w_k'}/{H(X_k')}})}{1-X_{j,\,l}'\overline{X'}_{k,\,l}}
\end{bmatrix}}_{j,k=1}^{2}
\geq 0.\label{E:mainm}
\end{equation}
%We write the above expression in the form
%\begin{equation}
%{\begin{bmatrix}
%\frac{1-({\zeta_j'}/{X_{jl}'})(\overline{{\zeta_k'}/{X_{kl}'}})}{1-X_{jl}'\bar{X}_{kl}'}
%\end{bmatrix}}_{j,k=1}^{2}
%\geq 0,\label{E:posisq}
%\end{equation}
%where $\zeta_j'={w_j'X_{jl}'}/H(X_j'),\,\,j=1,2.$
%Then from the second part of the Result~\ref{R:quadposi} the statement \eqref{E:posisq} is equivalent to that of 
%\begin{equation}
%{\begin{bmatrix}
%\frac{1-\zeta_j'\bar{\zeta_k'}}{1-X_{jl}'\bar{X}_{kl}'}
%\end{bmatrix}}_{j,k=1}^{3}
%\geq 0,
%\end{equation}
%where ${\zeta}'_{3}=0$ and ${X_{3l}'}=0.$
%Let $\zeta_{j}=\psi_{-w_3}({{w_j}'X_{jl}'}/{H(X_j')}),\,\,j=1,2$ and 
%$\zeta_3=w_3.$ Notice that $X_{jl}=\psi_{-X_{3l}}(X_{jl}').$ Then using the first part of the
%Result~\ref{R:quadposi} 
%we get that the above condition is equivalent to
%\begin{equation}
%{\begin{bmatrix}
%\frac{1-{\zeta_j}\bar{\zeta}_k}{1-X_{jl}\bar{X}_{kl}}
%\end{bmatrix}}_{j,k=1}^{3}
%\geq 0.\label{E:maincon}
%\end{equation}

The interpolation criterion in (\rom{3}) is stated
in terms of a quadratic form because \eqref{E:card} does not make sense in Case~1.
In Case~1 the existence of the interpolant $\widetilde{F}$ implies that the interpolant $G$ is 
the constant $1,$ whence $w_j'/H(X_j')=1,\ j=1,2.$ Trivially, the matrix in \eqref{E:mainm}
is positive semi-definite. This establishes the implication (\rom{2})\,$\Rightarrow$\,(\rom{3}). 
\smallskip

Let us denote the matrix in \eqref{E:mainm} by $M_l.$ In view of the chain of equivalences discussed above,
the implication (\rom{3})\,$\Rightarrow$\,(\rom{2}) will follow if we can produce a
rational inner function $G$ with the properties stated in our Claim.
So we assume that $M_l$ is positive semi-definite (which tacitly assumes the existence of the function $H$
with the properties stated above). Using Result \ref{Re:Blaint}, we get  
a finite Blaschke product $B$ (which includes the case when $B$ is a unimodular constant)
that interpolates the data $(X_{1,\,l}',X_{2,\,l}';w_1'/H(X_1'),w_2'/H(X_2')).$ Take $G=B\circ \pi_l,$ where $\pi_l$ 
denotes the projection onto the $l$-th coordinate. This $G$ satisfies all the properties as in the above Claim.
\smallskip

Suppose, now, that the condition in (\rom{3}) holds true. Then it is easy to see that the matrix in 
\eqref{E:mainm} is the zero matrix if and only if $w_1'/H(X_1')=w_2'/H(X_2')=c\in\partial{\D}.$ It follows from
the discussion in the previous paragraph that
$\widetilde{F}=\psi_{w_3}\circ F\circ\Psi^{-1}_{X_3}=cH,$ and $(a)$ follows from this. When the rank $r\geq 1$,
we refer to the full force of Result \ref{Re:Blaint}: this gives the degree of the Blaschke product $B$
mentioned in the previous paragraph. 
Arguing as before, $\widetilde{F}=(B\circ\pi_l)H,$ and we are done.
\end{proof}

\subsection{Implications of the proof of Theorem~\ref{T:mainThm}}\label{SS:impli}
We begin by elaborating upon the point made in Remark \ref{Rm:m}. Suppose, by some method,
one has a rational inner interpolant $F_1$ for the data $(X_1,X_2,X_3;w_1,w_2,w_3)$ having a high
degree. By ``degree'' here, we refer to $|\nu(\que_1)|$, where $\que_1$ is the polynomial
associated to $F_1$ with the properties given by part~$(a)$ of Proposition~\ref{P:factprop}. {\em If
$F_1$ is a reducible inner function}, as defined prior to Corollary~\ref{C:irdefpol}, then let
$\widetilde{F_1}:=\psi_{w_3}\circ F_1\circ{\Psi_{X_3}^{-1}}$ and write                                                        
\[
 \mathfrak{S}\,:=\,\text{the set of irreducible rational inner factors $H$ of $\widetilde{F_1}$ such that $H(0) = 0$.}
\]

At this stage, one might be tempted to say that {\em generically $F_1$ is irreducible} (recall that $n\geq 2$),
whence the circumstances under which the algorithm below can be used occur rarely. However, this preliminary
insight does not capture the realities of the interpolation problem $(*)$ in their entirety. For instance, we have that:
\begin{itemize}
 \item[$(\bullet)$] The subset of $(\D^n)^N\times\D^N$
 of those $N$-point Pick--Nevanlinna data that admit an irreducible rational inner interpolant contains a
 non-empty relatively-open subset $\OM$ such that each of the
 data-points in $\OM$ {\em also} admits a reducible rational interpolant.
\end{itemize}
We shall elaborate upon this\,---\,as well as on a further point about the utility of the following algorithm\,---\,in
Remark~\ref{Rm:Gen} below. At present, we resume the discussion begun above.
\smallskip 
     
For $H\in \mathfrak{S}$, let $\que_H$ be the polynomial associated to $H$ with the properties given by
part~$(a)$ of Proposition~\ref{P:factprop}. Let $H^*$ be such that
\[
 |\nu(\que_{H^*})|\,=\,\min\{|\nu(\que_H)| : H\in \mathfrak{S}\}.
\]
Let $M_l(H^*)$ denote the matrices of the form given by \eqref{E:mainm} with $H = H^*$ and
let%\vspace{-1mm}    
\[
\mathscr{S}:=\{1\leq l\leq n: M_l(H^*)\,\,\text{is positive semi-definite}\}.%\vspace{-1mm}
\] 
 By our proof above, we have that $\mathscr{S}\neq \emptyset$. Let $m = \min\{\text{rank}(M_l(H^*)) :
 l\in \mathscr{S}\}$ and let $l_{0}$ be such that  
$m=\text{rank}(M_{l_0}(H^*)).$ There will exist a Blaschke product $B$ of degree $m$ that interpolates
the data $(X_{1,\,{l_0}}', X_{2,\,{l_0}}'; w_1'/H^*(X_1'),$ $w_2'/H^*(X_2'))$\,---\,this is a
consequence of Result \ref{Re:Blaint}. Write:
\[
 F_2:={\psi}^{-1}_{w_3}\circ((B\circ \pi_{l_0})H^*)\circ\Psi_{X_3}.
\]
In the event that $|\nu(\que_1)| > |\nu(\que_{H^*})| + m$, $F_2$ is an interpolant of
strictly lower ``degree''.

\begin{remark}\label{Rm:Gen}
We begin with a brief justification of $(\bullet)$. Define
\[
 \Delta\,:=\,\{\!(X_1,\dots, X_N)\in (\D^n)^N : X_{1,\,l},\dots, X_{N,\,l} \ \text{are distinct for each
 $l=1,\dots, n$}\}.
\]
Now consider the set
\[   
 \mathcal{S}\,:=\,\bigg\{\!(X_1,\dots, X_N; w_1,\dots, w_N)\in \Delta\times(\D)^N: 
 			\left[\frac{1-\overline{w}_k w_j}{1-\overline{X}_{k,\,l}X_{j,\,l}}\right]_{j,\,k = 1}^N\!\!\geq 0 \ \text{for
 			some $l= 1,\dots, n$}\bigg\}.
\]
It is easy to see that $\mathcal{S}$ has non-empty interior. It follows from the classical Pick--Nevanlinna
theory that the generic $(X_1,\dots, X_N; w_1,\dots, w_N)$ in $\mathcal{S}$ admits a {\bf reducible} rational
inner interpolant. This is because membership in $\mathcal{S}$ implies the existence of an interpolant that is the composition of a
Blaschke product with the projection onto the $l$-th coordinate for some $l$. Indeed, for each point in $\mathcal{S}^\circ$, at least
one of the associated matrices\,---\,indexed by $l = 1,\dots, n$\,---\,appearing in the definition of $\mathcal{S}$ is of
rank $N$. It follows from Result~\ref{Re:Blaint} that each such point admits an interpolant for which the Blaschke
product referred to above is of degree $N$. From this, $(\bullet)$ follows. This has a bearing on the point\,---\,raised
above\,---\, that
``generically $F_1$ is irreducible,'' where $F_1$ is as in the beginning of the present subsection. The latter is a principle
that relies on the fact that the class of rational inner functions of degree\,$\leq d$
(where ``degree'' is as explained at
the beginning of this subsection) is parametrized\,---\,thanks to Fact~\ref{SS:rif}\,---\,by a real-analytic (hence
stratified) set whose generic stratum is the manifold 
$\{\zt\in \C: |\zt| = 1\}\times V_d$, where
\[
  V_d\,:=\,\{P\in \C[z_1,\dots,z_n] : {\rm deg}(P)\leq d \ \text{and} \ Z(P)\cap \D^n = \emptyset\},
\]
and that irreducible polynomials are generic in $V_d$ (since $n\geq 2$). {\em However}, this is, largely, not
germane to the problem $(*)$, even for $N = 3$, because:
\begin{itemize}
 \item It is unclear that the set
 of Pick--Nevanlinna data admitting {\em irreducible} rational inner interpolants is generic in the subset of
 $(\D^n)^3\times\D^3$ of all $3$-point Pick--Nevanlinna data that admit an interpolant of the Schur class
 on $\D^n$.
 \item As we have just discussed, there is a ``large'' set\,---\,contained within the set
 of all $N$-point Pick--Nevanlinna data that admit an interpolant of the Schur class
 on $\D^n$\,---\,of Pick--Nevanlinna data that are interpolated by reducible rational inner functions.
\end{itemize}
In short, it will not be uncommon, given some data $(X_1,X_2,X_3;w_1,w_2,w_3)$ that admits an
interpolant of the Schur class on $\D^n$, to obtain, by numerical methods or otherwise, a rational
inner interpolant that is reducible. The algorithm discussed above will then apply to such an interpolant.    
\end{remark}

We had hinted earlier that the proof of (\rom{2})\,$\Leftrightarrow$\,(\rom{3})
contains ideas for extending our result\,---\,at least when rational interpolants are sought\,---\,to
$N$ points, $N\geq 4.$ We shall end with some remarks on this issue. 
For this purpose, we need the following result about the positivity of
certain quadratic forms. The 
proof of the result is found in the body of the proof of Theorem 2.2 in Garnett \cite{garnett:ftp07}.
\begin{result}\label{R:quadposi}
Let $\{(a_j,b_j)\in\D\times\D: 1\leq j\leq n\},$ where $a_j$'s are distinct.
Let $a_j'=\psi_{a_n}(a_j)$ and $b_j'=\psi_{b_n}(b_j),\,\,1\leq j\leq n.$
Consider the quadratic form:
\begin{equation*}
Q_n(t_1,t_2,\dots,t_n):=\sum_{j,k=1}^{n}\frac{1-b_j\bar{b}_k}{1-a_j\bar{a}_k}t_j\bar{t}_k.
\end{equation*}
Let $Q_n'$ be the quadratic form obtained from $Q_n$ by replacing $a_j$ with $a_j'$ and $b_j$ with $b_j'.$ Then
\begin{equation*}
Q_n\geq 0 \iff Q_n'\geq 0.
\end{equation*} 
Moreover if we take $a_n=0=b_n$ in $Q_n$, and consider the quadratic form:
\begin{equation*}
\widetilde{Q}_{n-1}(s_1,s_2,\dots,s_{n-1})=
\sum_{j,k=1}^{n-1}\frac{1-({b_j}/{a_j})(\overline{{b_k}/{a_k}})}{1-a_j\bar{a}_k}s_j\bar{s}_k,
\end{equation*}
then
\begin{equation*}
Q_n\geq 0 \iff \widetilde{Q}_{n-1}\geq 0 \\\ \text{(taking $a_n=0=b_n$ in $Q_n$)}.
\end{equation*}
\end{result}

\begin{remark}\label{Rm:E}
Using Result \ref{R:quadposi} it is possible to replace the positive semi-definiteness of the matrix in \eqref{E:mainm}
by the positive semi-definiteness of a certain $3\times 3$ matrix where, in the denominator of each entry
$X_{j,\,l}, \ j=1, 2, 3,$ appear.
The proof of (\rom{2})\,$\Leftrightarrow$\,(\rom{3})
together with the above discussion that ``inflates''
condition \eqref{E:mainm}
into a condition on a $3\times 3$ matrix, whose form is suggestive, points to
a generalization for $N$ points, $N\geq4,$
involving the positivity of an $N\times N$ matrix. Our proof would proceed by deflating the   
$N$-point data set to an equivalent $(N-1)$-point data set, which sets up an inductive scheme as
in the Schur algorithm. But this results in a condition that is perhaps too unwieldy to be useful.
We will not present this generalization here as the associated technicalities only obscure
the main idea underlying this work.
\end{remark}
\medskip

\end{document}